\numberwithin{equation}{section}
\theoremstyle{plain}
\newtheorem{thm}{Theorem}[section]
\newtheorem{coro}[thm]{Corollary}
\newtheorem{prop}[thm]{Proposition}
\newtheorem{lem}[thm]{Lemma}
\newtheorem{defi}[thm]{Definition}
\newtheorem{remark}[thm]{Remark}
\theoremstyle{definition}
\theoremstyle{remark}
\newtheorem{ex}[thm]{Exemple}
\newcommand{\lan}{\langle}
\newcommand{\ran}{\rangle}
\newcommand{\x}{\mathbf x}
\newcommand{\z}{\mathbf z}
\newcommand{\y}{\mathbf y}
\newcommand{\ess}{\text{\rm{ess}}\sup}
\newcommand{\impl}{\Rightarrow}
\newcommand{\lcal}{\mathcal{L}}
\newcommand{\R}{\mathbb{R}}
\newcommand{\res}{\mathop{\hbox{\vrule height 7pt width .5pt depth 0pt \vrule height .5pt width 6pt depth 0pt}}\nolimits}
\DeclareMathOperator{\spt}{spt}
\newcommand\e{\varepsilon}
\title {Optimal transportation for a quadratic cost with convex constraints and applications}
\date{October 13, 2011}
\author {C.~Jimenez \thanks{\scriptsize\ Laboratoire de Mathématiques de Brest, Université de Bretagne Occidentale, 6, Avenue Victor Le Gorgeu 29200 Brest, FRANCE, {\tt  jimenez@univ-brest.fr} },  F.~Santambrogio
\thanks{\scriptsize\ Laboratoire de Mathématiques d'Orsay, Universit\'e Paris-Sud, 91405 Orsay cedex, FRANCE, \texttt {filippo.santambrogio@math.u-psud.fr} }}
\begin{document}

\maketitle

\begin{abstract}
We prove existence of an optimal transport map in the Monge-Kantorovich problem associated to a cost $c(x,y)$ which is not finite everywhere, but coincides with $|x-y|^2$ if the displacement $y-x$ belongs to a given convex set $C$ and it is $+\infty$ otherwise. The result is proven for $C$ satisfying some technical assumptions allowing any convex body in $\R^2$ and any convex polyhedron in $\R^d$, $d>2$. The tools are inspired by the recent Champion-DePascale-Juutinen technique. Their idea, based on density points and avoiding disintegrations and dual formulations, allowed to deal with $L^\infty$ problems and, later on, with the Monge problem for arbitrary norms.
\end{abstract}

\textbf{Keywords:} optimal transportation, existence of optimal maps, convex bodies, $c-$monotonicity
\bigskip

\textbf{Acknowledgments:} The authors want to thank the French-Italian Galilée Project ``Allocation et Exploitation et Evolution Optimales des Ressources: r\'eseaux, points et densit\'es, mod\`eles discrets et continus'' and the support of the ANR project OTARIE (ANR-07-BLAN-0235 OTARIE).

\clearpage

\section{Introduction}
The optimal transport problem, introduced by G. Monge in \cite{monge} at the end of the 18th century, has become very famous in the past 20 years, when the relaxed formulation given by L.V. Kantorovich in the '40s, coupled with the most recent advances, has finally allowed to give a complete solution to many useful issues of the problem. Given $f_0<<dx$ and $f_1$ two probability measures on $\R^d$, the issue is to push forward $f_0$ to $f_1$ with a map $T:\R^d\rightarrow \R^d$ called optimal transport map which solves the following optimization problem:
$$(M)\quad \inf\left\{\int_{\R^d} c(x,T(x))\ df_0(x):\ T_\sharp f_0=f_1\right\}$$
where $c:\R^d\times \R^d\rightarrow [0,+\infty]$ is a fixed cost function. The original problem by G. Monge concerned the case where $c$ is the Euclidian distance, but later on many other cost functions have been investigated, and in particular the quadratic one $c(x,y)=|x-y|^2$, which has been popularized by \cite{bre}. We will not enter now into the details of the formulation that L.V. Kantorovich gave to overcome the difficulties of the nonlinear behavior of Problem (M) since we want to present as soon as possible the particular cost function that we want to consider. In this paper we will actually focus on a quadratic cost with a closed convex constraint $C\subset \R^d$:
\begin{equation}\label{cost}
c(x,y)=
\left\{\begin{array}{l c}
\vert y-x \vert^2 & \mbox{ if } y-x\in C\\
+\infty &\mbox{ otherwise, }
\end{array}
\right.
\end{equation}
where we denote by $\vert \cdot \vert$ the Euclidian norm.\\
Solving $(M)$ for this cost seems a natural question, both for its modelization meaning (imagine that $C$ is the unit ball in $\R^d$: in such a case the problem aims at finding the optimal displacement minimizing the total effort, subject to a pointwise constraint $|T(x)-x|\leq 1$) and for mathematical applications. Actually, it appears in \cite{CarDePSan} that a strategy to solve more general transport problems passes  through the solutions of some constrained cases, where it is imposed that $T(x)-x$ belongs to a given set, which is often the face of a convex body. The $L^\infty$ problems studied in \cite{Winfty} can also be considered constrained problems (if the minimal $L^\infty$ norm $||T(x)-x||_{L^\infty}$ is $L$, this means that we could consider maps constrained to satisfy $T(x)-x\in \overline{B(0,L)}$). In many of these cases, once the problem is transformed into a convexly constrained one, then it is only necessary to find an arbitrary transport map satisfying such a constraint. The minimization of the quadratic cost among them is just an additional criterion to select a special one, and it is chosen since it usually guarantees better properties. We refer anyway to  \cite{CarDePSan} for a more detailed discussion about convex costs in general and quadratic costs with convex constraints.

To illustrate the difference between our setting and the usual quadratic case $c(x,y)=\vert x-y\vert^2$, we give an example.\\
\begin{ex} \label{ex}
Let $u_i$, with $i=1,2,3$ three points of the plan with the following distances: $\vert u_1-u_2\vert=\vert u_2-u_3\vert=1$ and $\vert u_1-u_3\vert=5/4$. Take 
$B(u_i,\e)$ with $\e>0$  and $i=1,2,3$ three tiny balls and set $f_0$ and $f_1$ absolutely continuous with respect to the Lebesgue measure with densities $1_{B(u_1,\e)}+1_{B(u_2,\e)}$ and $1_{B(u_2,\e)}+1_{B(u_3,\e)}$ respectively. The unique optimal transport plan for the usual quadratic cost is $T_2(x)=x+(u_3-u_1)$ if $x\in B(u_1,\e)$ and the identity elsewhere, but this map is not admissible for the cost $c$ defined by (\ref{cost}) when $C$ is the unit ball.  The unique optimal transport plan for this cost is hence $T_c(x)=x+(u_{i+1}-u_i)$ if $x\in B(u_i,\e)$ for $i=1,2$. 
\end{ex}

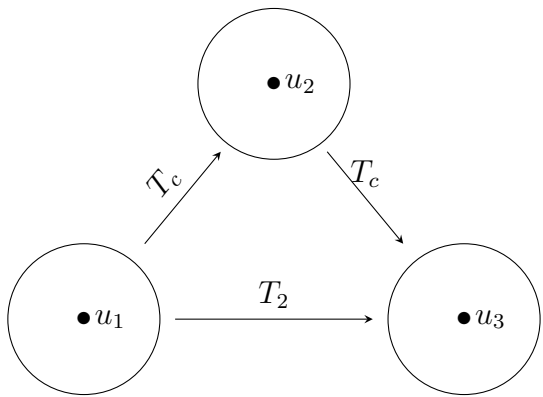
\begin{figure}[h]
\begin{center}

\begin{tikzpicture}
\draw  (-2.5,0) circle (1) ;
\draw  (2.5,0) circle (1) ;
\draw  (0,3.12) circle (1) ;
%
%
\draw  (-2.5,0)  node[right] {$u_1$} node{$\bullet$};
\draw  (2.5,0)  node[right] {$u_3$} node{$\bullet$};
\draw  (0,3.12)  node[right] {$u_2$} node{$\bullet$};

\draw [->,>=stealth]  (-1.3,0)  --  (1.3,0) node[midway,above] {$T_2$};
\draw [->,>=stealth]  (-1.7,1)  --  (-0.7,2.22) node[midway,above,sloped] {$T_c$};
\draw [->,>=stealth]  (0.7,2.22)  --  (1.7,1) node[midway,above] {$T_c$};

\end{tikzpicture}
\end{center}
\caption{The situation in Example \ref{ex}}
\label{loc1_fig1}
\end{figure}

As we said, the quadratic cost has often better properties than other general cost functions, and it is not astonishing that the first existence result for $(M)$ was actually proven for $c(x,y)=\vert y-x\vert^2$.  Actually, Y. Brenier proved in 1991, existence and uniqueness of an optimal map $T$, and the crucial point in the proof is the existence of a dual formulation 
$$\sup_{(u,v)\in {\cal C}(\R^d)\times {\cal C}(\R^d)}\left\{ \int_{\R^d} v(y)\ df_1(y)-\int_{\R^d} u(x)\ df_0(x):\ v(y)-u(x)\le c(x,y)\right\},$$
where the $\sup$ is actually achieved. This allowed  Brenier to use the primal-dual optimality condition and prove the optimality of the map
$$T(x)=x-{1\over 2}\nabla u(x).$$ 
This scheme of proof was successfully extended  to the cases $c(x,y)=\vert x-y\vert^p$ with $p>1$ (or, more generally, to any strictly convex function of $x-y$) by W. Gangbo and R.J McCann \cite{GM}, and by L.A Caffarelli \cite{Caf}. The original problem by Monge ($p=1$) happens to be more difficult and needed other arguments and techniques. It was solved later (see L.C. Evans and W. Gangbo \cite{EG}, N.S. Trudinger and X.J. Wang \cite{TW}, L. Ambrosio and A. Pratelli \cite{AP}, M. Feldman R.J. McCann \cite{FM}). The existence for the dual problem was also crucial in these proofs.\\ 

When we add a constraint, the existence of an optimal pair of functions  for the dual formulation $(D)$ is not guaranteed: the usual proofs of this result require continuity or finiteness of the cost (see the books of C. Villani \cite{villani} and \cite{V2}). Some very irregular pairs of solutions (not even functions) were found for non finite Borel costs by M. Beiglb\"ock, C. L\'eonard  and W. Schachermayer \cite{BLS}. Let us mention also that under the additional restrictive assumption $\int c(x,y)\ df_0(x)df_1(y)<+\infty$, W. Schachermayer and J. Teichmann proved the existence of an optimal pair of Borel functions for $(D)$ \cite{ST}. Hence, though very interesting, these results cannot be used in our setting.\\

Nevertheless, more recently, T. Champion, L. De Pascale and P. Jutinenn developed a completely different idea and manage to solve the `` $L^{\infty}$ case'' (see \cite{Winfty}). It is the case where, instead of minimizing the integral $\int |T(x)-x|^pdf_0(x)$, i.e. the $L^p$ norm, they considered the minimization of the $L^\infty$ norm (as we mentioned before).  This frame is very peculiar because, due to its non-integral formulation, there is no dual formulation for the Monge problem. Hence, they developed a completely different idea, which was powerful enough to be extended later on to some cases where the cost is still integral, but for a function $c(x,y)$ which is not strictly convex in $x-y$. In particular, this gave the proof of the existence of a solution $T$ to Monge's Problem for $c(x,y)=||x-y||$ for a general norm (\cite{CD1} and \cite{CD2}), which was unsolved before.\\
In this paper, we show that we can solve $(M)$ with $c$ defined by (\ref{cost}) using the same tools as T. Champion, L. De Pascale and P. Jutinenn.

In the next section, we state the main results. In section 3, we make some comments and give a proof in dimension 1. The general proof is given in subsection 4.2. after proving some preliminary results in subsection 4.1. Finally, in the last section, we give an application of our result to the case of crystalline norms and to some $L^\infty$ problems. 

\section{Main result}
In all the following, the cost $c$ is defined by (\ref{cost}), $f_0$ and $f_1$ are two probability measures on $\R^d$ and $f_0$ is absolutely continuous with respect to the Lebesgue measure.
Let us introduce the relaxed formulation of the Monge problem introduced by Kantorovich. This new problem, $(MK)$, satisfies $\inf(MK)=\inf(M)$ (see \cite{villani} and \cite{V2}):
$$(MK)\quad \inf\left\{\int_{\R^d\times\R^d} c(x,y)\ d\pi(x,y):\ \pi\in \Pi(f_0,f_1)\right\}$$
where $\Pi(f_0,f_1)$ is the set of probability measure on $\R^d\times\R^d$ with fixed marginals $f_0$ and $f_1$. The following classical result holds:

\begin{prop}
If there exists $\pi\in \Pi(x,y)$ such that $ \int_{\R^d\times\R^d} c(x,y)\ d\pi(x,y)$ is finite, then $(MK)$ has a solution.
\end{prop}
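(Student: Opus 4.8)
The plan is to apply the direct method of the calculus of variations. Since by hypothesis $\inf(MK)<+\infty$, I would first pick a minimizing sequence $(\pi_n)\subset\Pi(f_0,f_1)$, i.e. with $\int_{\R^d\times\R^d}c\,d\pi_n\to\inf(MK)$. The three ingredients needed to run the method are compactness of the competitor set, stability of the constraint $\pi\in\Pi(f_0,f_1)$ under the relevant convergence, and lower semicontinuity of the functional $\pi\mapsto\int c\,d\pi$.

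For compactness, I would use that the marginals $f_0$ and $f_1$ are fixed probability measures on $\R^d$, hence tight; this forces the whole family $\Pi(f_0,f_1)$ to be tight, since for any plan $\pi$ and any compact $K$ one has $\pi\big((\R^d\setminus K)\times\R^d\big)=f_0(\R^d\setminus K)$ and analogously for the second marginal. By Prokhorov's theorem I can then extract a subsequence (not relabeled) with $\pi_n\weakstarto\pi$ narrowly, where $\pi$ is a probability measure. Testing against functions of the form $(x,y)\mapsto\phi(x)$ and $(x,y)\mapsto\psi(y)$ with $\phi,\psi\in C_b(\R^d)$ shows that the two marginals of $\pi$ are still $f_0$ and $f_1$, so $\pi\in\Pi(f_0,f_1)$ is an admissible competitor.

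The heart of the argument is lower semicontinuity of $\pi\mapsto\int c\,d\pi$ along narrow convergence. Here I would first check that $c$ is itself lower semicontinuous: since $C$ is closed the set $\{(x,y):y-x\in C\}$ is closed, and on this set $c$ coincides with the continuous function $|y-x|^2$ while $c\equiv+\infty$ on the open complement; as $c(x,y)\ge|y-x|^2$ everywhere, any sequence $(x_n,y_n)\to(x,y)$ satisfies $\liminf_n c(x_n,y_n)\ge c(x,y)$ (distinguishing the cases $y-x\in C$ and $y-x\notin C$). Being nonnegative and l.s.c., $c$ is the increasing pointwise limit of a sequence of nonnegative bounded continuous functions $c_k$; for each fixed $k$ narrow convergence gives $\int c_k\,d\pi=\lim_n\int c_k\,d\pi_n\le\liminf_n\int c\,d\pi_n$, and letting $k\to\infty$ via monotone convergence yields $\int c\,d\pi\le\liminf_n\int c\,d\pi_n=\inf(MK)$. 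Together with admissibility this shows $\pi$ is a minimizer.

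The only genuinely delicate point, and the one I would handle with care, is this semicontinuity step: one must ensure that the $+\infty$ values and the discontinuity of $c$ are treated correctly, which is precisely where the closedness of the constraint set $C$ is used. The compactness and marginal-stability parts are routine consequences of tightness and of the definition of narrow convergence.
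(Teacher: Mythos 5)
Your proof is correct: the paper states this proposition without proof, as a classical result (deferring to Villani's books), and the argument it implicitly invokes is exactly the one you give --- tightness of $\Pi(f_0,f_1)$ inherited from the fixed marginals, Prokhorov compactness, stability of the marginal constraints under narrow convergence, and lower semicontinuity of $\pi\mapsto\int c\,d\pi$ for a nonnegative l.s.c.\ cost via monotone approximation by bounded continuous functions. Your verification that $c$ is l.s.c.\ (using the closedness of $C$ and the bound $c(x,y)\geq |y-x|^2$) is the only point specific to this cost, and you handle it correctly.
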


The classical strategy to solve $(M)$ is to show that at least a solution $\gamma$ of $(MK)$ is concentrated on the graph of a map $T$ i.e:
$$\int \varphi(x,y)\ d\gamma(x,y)=\int \varphi(x,T(x))\ df_0(x)\quad \forall \varphi\in {\cal C}_c(\R^d).$$ 
Indeed, the map $T$ is admissible for $(M)$ if $\gamma$ is admissible for $(MK)$ and it is optimal if $\gamma$ is optimal. Conversely for any $T$ admissible for $(M)$, $\gamma=({\rm Id}\times T)_\sharp f_0$ is admissible for $(MK)$, and if $T$ is optimal, $\gamma$ is optimal too.
\medskip

In order to state the main theorem of this paper, we need to give some definitions on the geometry of convex sets.
\begin{defi} Consider a convex set $C\subset\R^d$.
\smallskip
\begin{enumerate}
\item If $x_0$ is any point of $C$, we call {\bf dimension} of $C$ the dimension of ${{\rm Span}(C-x_0)}$, i.e. the dimension of the smallest affine space containing $C$. This number is denoted $\dim(C)$ and does not depend on the choice of $x_0$. The  interior and boundary of $C$ in the canonical topology associated to the Euclidian distance in such an affine space are called relative interior and {\bf relative boundary} and denoted by $ {\rm ri}C$ and  ${\rm r}\partial C$.
\item A convex subset $F\subset C$ containing more than one point is called a {\bf straight part} of $C$ if it is contained in relative boundary ${\rm r}\partial C$.
\item A subset $F$ of a convex set $C$ is called a {\bf maximal flat part}  of $C$ if it is maximal for the inclusion among straight parts of $C$.
\item A subset $F$ of a convex set $C$ is called a {\bf flat part}  of $C$ if it there is a finite chain of convex sets $F=F_1\subset F_2\subset\dots \subset F_{k-1}\subset F_k=C$ ($k\geq 2$) such that, for each $i=1,\dots, k-1$, $F_i$ is a maximal flat part of $F_{i+1}$. Notice that in this case one has $\dim(F_1)<\dim(F_2)<\dots<\dim(F_k)$. In particular, for every flat part $F$ of $C$ we have $1\leq \dim(F)\leq (d-1)$. 
\item A set $C\subset \R^d$ is said to be {\bf strictly convex} if it has no flat parts.
\end{enumerate}
\end{defi}

The aim of this paper is to prove the following result:
\begin{thm}\label{THM}
Assume that:
\begin{enumerate}
\item[i)] there exists $\pi\in \Pi(f_0,f_1)$ such that $ \int_{\R^d\times\R^d} c(x,y)\ d\pi(x,y)<+\infty$,
\item[ii)] $C$ has at most a countable number of flat parts.
\end{enumerate}
Then, if $f_0$ is absolutely continuous with respect to the Lebesgue measure on $\R^d$,  any solution $\gamma$ of $(MK)$  is concentrated on the graph of a map $T$ which is optimal for $(M)$.
\end{thm}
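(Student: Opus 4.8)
The plan is to combine the two-point $c$-monotonicity of an optimal plan with the absolute continuity of $f_0$, following the density-point philosophy. The first step is to record that $\gamma$ is concentrated on a Borel set $\Gamma$ which is $c$-monotone in the two-point sense: whenever $(x_1,y_1),(x_2,y_2)\in\Gamma$ are such that the swapped displacements $y_2-x_1$ and $y_1-x_2$ both lie in $C$, then
$$|y_1-x_1|^2+|y_2-x_2|^2\le |y_2-x_1|^2+|y_1-x_2|^2,$$
which, expanding the squares, is equivalent to $(x_1-x_2)\cdot(y_1-y_2)\ge 0$. This is standard for the lower semicontinuous cost $c$, the exchanged plan being an admissible competitor precisely because the finiteness of its cost is guaranteed by $y_2-x_1,y_1-x_2\in C$. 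Writing the disintegration $\gamma=\int\gamma_x\,df_0(x)$, the measure $\gamma$ is induced by a map exactly when $\gamma_x$ is a Dirac mass for $f_0$-a.e. $x$. Arguing by contradiction, I suppose this fails on a set of positive $f_0$-measure; a measurable selection then yields two Borel maps $x\mapsto v_1(x),v_2(x)\in C$ with $(x,x+v_1(x)),(x,x+v_2(x))\in\Gamma$ and, on a set of positive measure, $|v_1(x)-v_2(x)|\ge\delta$ for some $\delta>0$. By Lusin's theorem I restrict to a compact set $A$ of positive measure on which $v_1,v_2$ are continuous.

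The engine of the proof is an exchange argument. Suppose for a moment that I can find $x,x'\in A$ with exactly $x'=x+t(v_1(x)-v_2(x))$ for some $t\in(0,1)$, momentarily ignoring that $v_i(x')\ne v_i(x)$. Exchanging the targets of the pair $(x,x+v_1(x)),(x',x'+v_2(x'))\in\Gamma$, the two new displacements become $t\,v_1(x)+(1-t)\,v_2(x)$ and $(1-t)\,v_1(x)+t\,v_2(x)$, which lie in $C$ by convexity, so the exchange is admissible; meanwhile a direct computation gives
$$(x-x')\cdot\big((x+v_1(x))-(x'+v_2(x'))\big)=-t(1-t)\,|v_1(x)-v_2(x)|^2<0,$$
contradicting $c$-monotonicity. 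Thus the difficulty reduces to producing such a configuration while keeping the displacements close to prescribed values, and then checking that the errors do not spoil either the admissibility or the strict sign above.

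Here the absolute continuity of $f_0$ enters. I fix a Lebesgue density point $x_0$ of $A$, set $\bar v_i=v_i(x_0)$ and $\tau=t(\bar v_1-\bar v_2)$, and look for $x\in A$ near $x_0$ with also $x+\tau\in A$. Since $x_0$ is a density point, the set $A\cap(A-\tau)$ has almost full density in every sufficiently small ball around $x_0$, as soon as $|\tau|$ is small compared with the radius; in particular it is nonempty there. Choosing first the radius small enough that, by continuity, $v_i$ stays within a prescribed $\eta\ll\delta$ of $\bar v_i$ on the ball, and then $t$ small enough that $|\tau|$ is negligible with respect to the radius, produces the desired pair $x,x+\tau$ with controlled displacements. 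With the errors $e_i=v_i(\cdot)-\bar v_i$ bounded by $\eta$, the dot product above becomes $-t(1-t)|\bar v_1-\bar v_2|^2+O(t\eta)$, which stays strictly negative for $\eta\ll\delta$, giving the contradiction.

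The remaining and main obstacle is the admissibility of the exchange: in the computation above the perturbed displacements must still lie in $C$, and this was argued as if the convex combination $t\bar v_1+(1-t)\bar v_2$ lay in ${\rm int}\,C$, so that small errors are harmless. This is fine when the open segment $]\bar v_1,\bar v_2[$ meets ${\rm int}\,C$, in which case by convexity it lies entirely in ${\rm int}\,C$. The delicate case is $[\bar v_1,\bar v_2]\subset{\rm r}\partial C$, where the exchanged displacements sit on the boundary and an out-of-$C$ error would make the new cost infinite, destroying the contradiction; this is exactly where assumption (ii) is used. For each double point $x$, either $]v_1(x),v_2(x)[\subset{\rm int}\,C$, or $[v_1(x),v_2(x)]$ is a straight part and, taking a flat part $F$ of smallest dimension containing it, one has $v_1(x),v_2(x)\in F$ and $]v_1(x),v_2(x)[\subset{\rm ri}\,F$. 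Since $C$ has only countably many flat parts, one of these countably many Borel conditions must hold on a set of positive measure. On the interior piece the argument runs verbatim. On a piece attached to a fixed flat part $F$, the displacements genuinely lie in the affine hull of $F$, so the errors $e_i$ lie in the direction space $V$ of $F$; taking $\tau\in V$ as well, the exchanged displacements are convex combinations belonging to ${\rm ri}\,F$ plus a small $V$-perturbation, hence remain in $F\subset C$, and the same exchange yields the contradiction. The crux, therefore, is not the monotonicity computation, which is elementary, but the simultaneous control of admissibility, through convexity and the countable flat-part decomposition, and of the displacements, through the density-point selection.
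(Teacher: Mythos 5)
Your overall strategy --- two-point $c$-monotonicity, a measurable selection on the set of ``double points'', Lusin, a Lebesgue density point, and an exchange along a fixed translation $\tau=t(\bar v_1-\bar v_2)$ --- is a legitimate variant of the Champion--De Pascale--Juutinen philosophy, and it genuinely differs from the paper, which instead proves a cone lemma (Lemma \ref{central}: two images of the same $x_0$ whose displacement segment meets ${\rm Int}(C)$ must coincide) and then disposes of the flat parts by induction on the dimension, disintegrating $\gamma$ over the countably many maximal flat parts. There is, however, a genuine gap in your admissibility step, and it sits exactly where you locate ``the crux''. Your order of quantifiers is: choose $\eta$, hence a radius $r$ by continuity, then choose $t$ so small that $|\tau|=t|\bar v_1-\bar v_2|$ is negligible with respect to $r$; since $|\bar v_1-\bar v_2|\geq\delta$ is fixed and $r$ is small, you are forced to take $t\to 0$. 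Now nothing excludes $\bar v_2\in\partial C$ (or $\bar v_2\in{\rm r}\partial F$ in the flat case): only the \emph{open} segment is known to lie in ${\rm Int}(C)$ or ${\rm ri}\,F$. In that case the base point $t\bar v_1+(1-t)\bar v_2$ is only at distance $O(t)$ from the boundary, while your perturbed displacement, e.g.\ $\tau+v_2(x+\tau)=t\bar v_1+(1-t)\bar v_2+\bigl(v_2(x+\tau)-\bar v_2\bigr)$, carries an error controlled only by $\eta$. So ``small errors are harmless'' requires $\eta\ll t$, which is incompatible with choosing $t$ after $r(\eta)$ unless the modulus of continuity satisfies $\eta(r)=o(r)$ --- and Lusin gives no rate. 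As written, the exchange may have infinite cost precisely in the boundary-endpoint case, and the contradiction evaporates.

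The gap is repairable, but the repair is the real content. One fix: write the swapped displacements as \emph{exact} convex combinations at a single point, $\tau+v_2(x')=\bigl[t\,v_1(x')+(1-t)\,v_2(x')\bigr]+t(a-b)$ with $a=\bar v_1-v_1(x')$, $b=\bar v_2-v_2(x')$, so the error is $O(t\eta)$ rather than $O(\eta)$; then use concavity of the distance to $\partial C$ (resp.\ to ${\rm r}\partial F$ inside the affine hull of $F$) along the segment $[v_2(x'),v_1(x')]$ to bound that distance from below by $2t\,d(1/2)$, turning the admissibility constraint into the $t$-independent requirement $\eta<d(1/2)$. This is exactly what the paper's entering-direction Lemma \ref{interieur1} packages in a scale-free way: there, the swapped displacement $y_0'-x=(y_0'-x_0)+(x_0-x)$ lies in a fixed cone with vertex $y_0'-x_0$ pointing into ${\rm Int}(C)$, hence in ${\rm Int}(C)$ no matter how small $|x-x_0|$ is. Your treatment of the flat parts (smallest flat part $F$ containing $[v_1,v_2]$, open segment in ${\rm ri}\,F$, errors confined to the direction space of $F$, countably many cases) is correct in outline and is an attractive alternative to the paper's induction on dimension, but it needs the same quantitative repair relative to ${\rm r}\partial F$.
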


\begin{remark}
Assumption ii) is obviously satisfied if $C$ is strictly convex. Moreover, if $C$ satisfies ii), then any flat part of $C$ is a convex subset of an affine space $\R^k$ for $k<d$ which satisfies the same assumption. 
Finally, this assumption allows every bounded convex set in $\R^2$ (because flat parts should be subsets of positive length of its boundary, which has finite perimeter, and hence they cannot be more than countably many) and all compact polytopes in any dimension.
\end{remark}

\begin{coro}
Under the assumption of Theorem \ref{THM}, the solutions of $(MK)$  and $(M)$ are unique. 
\end{coro}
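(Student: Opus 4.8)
The plan is to derive the corollary directly from Theorem \ref{THM} via the standard convexity argument, exploiting the fact that the Kantorovich functional $\gamma\mapsto\int c\,d\gamma$ is affine on the convex set $\Pi(f_0,f_1)$, so that uniqueness becomes a purely measure-theoretic consequence of the graph-concentration established in the theorem.

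First I would observe that the set of solutions of $(MK)$ is convex. Indeed, if $\gamma_0$ and $\gamma_1$ both minimize, then by linearity of the cost integral the midpoint $\gamma_{1/2}:=\frac12(\gamma_0+\gamma_1)$ again belongs to $\Pi(f_0,f_1)$ and realizes the same (finite) minimal value, hence is itself a solution of $(MK)$. Theorem \ref{THM} then applies to $\gamma_{1/2}$: there exists a map $T$ such that $\gamma_{1/2}$ is concentrated on the graph $G:=\{(x,T(x))\}$.

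The key step is to transfer this concentration to $\gamma_0$ and $\gamma_1$ individually. Since $\gamma_0,\gamma_1\ge 0$ and $\gamma_0+\gamma_1=2\gamma_{1/2}$, we have $\gamma_i(G^c)\le 2\gamma_{1/2}(G^c)=0$ for $i=0,1$, so both $\gamma_0$ and $\gamma_1$ are concentrated on $G$. Now a plan with first marginal $f_0$ that is concentrated on the graph of $T$ is necessarily equal to $(\mathrm{Id}\times T)_\sharp f_0$: testing against $\varphi\in\CC_c(\R^d\times\R^d)$ gives $\int\varphi\,d\gamma_i=\int\varphi(x,T(x))\,df_0(x)$, which determines $\gamma_i$ uniquely. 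Therefore $\gamma_0=\gamma_1=(\mathrm{Id}\times T)_\sharp f_0$, proving uniqueness of the solution of $(MK)$.

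Uniqueness for $(M)$ then follows because $\inf(M)=\inf(MK)$: any map $T'$ optimal for $(M)$ induces an optimal plan $(\mathrm{Id}\times T')_\sharp f_0$ for $(MK)$, so two optimal maps $T_1,T_2$ would yield $(\mathrm{Id}\times T_1)_\sharp f_0=(\mathrm{Id}\times T_2)_\sharp f_0$; testing against products $\varphi(x)\psi(y)$ with $\psi$ running over a countable point-separating family forces $T_1=T_2$ $f_0$-almost everywhere. I do not expect a genuine obstacle here, since all the analytic difficulty is already contained in Theorem \ref{THM}; the only point requiring a little care is the measure-theoretic fact that domination by a graph-concentrated measure transmits graph-concentration, which is immediate from positivity.
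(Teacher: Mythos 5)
Your proof is correct and follows essentially the same route as the paper: both exploit that the midpoint $\tfrac12(\gamma_0+\gamma_1)$ is again optimal, apply Theorem \ref{THM} to it, and conclude that the two plans must coincide. The only (harmless) difference is that you deduce concentration of $\gamma_0,\gamma_1$ on the graph by positivity and domination, whereas the paper applies the theorem to all three plans and compares the resulting maps $T_1,T_2,T_3$ via the test-function identity.
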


\begin{proof} [Proof of the corollary] The idea is classical : let $\gamma_1$ and $\gamma_2$ be two solutions of $(MK)$, let $T_1$ and $T_2$ the associated transport maps (given by Theorem \ref{THM}). The probability measure ${\gamma_1 +\gamma_2\over 2}$ is also associated to a transport map $T_3$. Then for any $\varphi\in {\cal C}_c(\R^d\times \R^d)$:
$${1/2}\left(\int_{\R^d} \varphi(x,T_1(x))\ df_0(x) +\int_{\R^d} \varphi(x,T_2(x))\ df_0(x)\right)= \int_{\R^d} \varphi(x,T_3(x))\ df_0(x).$$
This implies $f_0(\{x:\ T_1(x)\not=T_2(x)\})=0$, that is $\gamma_1=\gamma_2$ and $T_1=T_2$ almost everywhere. 
\end{proof}

\section{Proof and comments in dimension 1}
The one-dimensional case is a very special one. Actually, for a very wide class of transport costs, the optimal transport is always characterized in the same way and is always the same, namely the unique nondecreasing transport map sending $f_0$ to $f_1$. This is summarized in the following theorem (the proof is written here in its full generality, even if it is essentially taken from \cite{ambrosio}):
\begin{thm}
Let $\phi:\R\to\R\cup\{+\infty\}$ be a l.s.c. strictly convex function (which actually means strictly convex on its domain $\{\phi<+\infty\}$), and $f_0$ and $f_1$ two probability measures on $\R$, with $f_0$ non-atomic. Then the optimal transport problem 
$$\inf\left\{\int_{\R^2} \phi(y-x)\ d\pi(x,y):\ \pi\in \Pi(f_0,f_1)\right\}$$
has a unique solution, which is given by $\gamma=({\rm Id}\times T)_\# f_0$, where $T$ is the nondecreasing transport map from $f_0$ to $f_1$, which is well defined $f_0-$a.e.

Moreover, if the strict convexity assumption is dropped and $\phi$ is only convex, then the same $T$ is actually an optimal transport map, but no uniqueness is guaranteed anymore.
\end{thm}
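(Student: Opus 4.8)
The plan is to follow the classical rearrangement argument, whose core is a two-point inequality. First I would secure existence of a minimizer: since $\Pi(f_0,f_1)$ is tight, hence weakly-$*$ compact, and since $\phi$ is l.s.c. and (after subtracting an affine minorant, which only shifts the cost of every $\pi$ by a constant depending on the marginals) bounded below, the functional $\pi\mapsto\int\phi(y-x)\,d\pi$ is l.s.c. for the weak-$*$ convergence; the direct method then yields an optimal $\gamma$.

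The geometric heart is the following. For $x_1<x_2$ and $y_1>y_2$, write $a=y_1-x_1$, $b=y_2-x_2$, $a'=y_2-x_1$, $b'=y_1-x_2$. Then $a'+b'=a+b$ and $\{a',b'\}\subset[\min(a,b),\max(a,b)]$, the inclusion being into the interior whenever $x_1<x_2$ and $y_1>y_2$ strictly. Two consequences follow. First, convexity of $\phi$ gives
$$\phi(a')+\phi(b')\le\phi(a)+\phi(b),$$
strict when $\phi$ is strictly convex. Second, and this is what makes the argument work for a cost that may equal $+\infty$: since $\dom\phi$ is an interval, $a,b\in\dom\phi$ forces $a',b'\in\dom\phi$, so the swap never creates infinite cost.

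Next I would deduce that $\gamma$ is concentrated on a monotone (non-decreasing) set. If not, then $(\gamma\otimes\gamma)$ charges the set of crossing pairs $\{((x_1,y_1),(x_2,y_2)):x_1<x_2,\ y_1>y_2\}$, so $\gamma$ charges two small disjoint boxes around such points; restricting $\gamma$ to these boxes and swapping the transported mass produces a competitor $\tilde\gamma\in\Pi(f_0,f_1)$ (marginals are preserved) whose cost is strictly smaller by the strict inequality above and still finite by the domain remark, contradicting optimality. This is exactly the two-point case of $c$-monotonicity. Hence $\gamma$ is concentrated on a monotone set $\Gamma\subset\R\times\R$. Such a $\Gamma$ is single-valued in $x$ except at the abscissas carrying a nondegenerate vertical segment, which are at most countably many (to each one attaches a disjoint open $y$-interval); since $f_0$ is non-atomic this exceptional set is $f_0$-negligible, so $\Gamma$ defines a non-decreasing map $T$ with $\gamma=(\mathrm{Id}\times T)_\#f_0$, and reading the second marginal gives $T_\#f_0=f_1$. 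The non-decreasing transport map from $f_0$ to $f_1$ is unique $f_0$-a.e.\ (it is the monotone rearrangement $T=F_1^{-1}\circ F_0$ built from the cumulative distribution functions), so every optimal $\gamma$ equals $(\mathrm{Id}\times T)_\#f_0$ for this single $T$, proving both the representation and the uniqueness.

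Finally, for merely convex $\phi$ I would argue by approximation: the strictly convex costs $\phi_\eps=\phi+\eps\,\psi$, with $\psi$ a fixed strictly convex (e.g.\ quadratic) function, share the same unique optimizer, namely the monotone map $T$, which does not depend on $\eps$. Passing to the limit $\eps\to0$ (by monotone convergence) in the optimality inequality $\int\phi_\eps\,d(\mathrm{Id}\times T)_\#f_0\le\int\phi_\eps\,d\pi$ shows that $T$ is still optimal, while a purely affine $\phi$ exhibits the failure of uniqueness. The main obstacle is the rigorous local-swap step: one must perform a measurable restriction-and-gluing of $\gamma$ on the two boxes and check that the resulting measure is admissible with strictly smaller finite cost, the interval structure of $\dom\phi$ being precisely what keeps the competitor in the finite-cost regime.
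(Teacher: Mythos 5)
Your proof is correct and follows essentially the same route as the paper: the same two-point exchange inequality (with the same key observation that convexity keeps the swapped displacements inside $\dom\phi$, so the competitor stays at finite cost), monotonicity of the support, countability of the jump set combined with non-atomicity of $f_0$, and approximation by $\phi+\eps\abs{\cdot}^2$ for the merely convex case. The only difference is presentational: where you propose a hands-on local-swap construction (and rightly flag it as the delicate step), the paper simply invokes the standard fact that optimal plans for l.s.c.\ costs are concentrated on $c$-cyclically monotone sets, of which your crossing-pair inequality is the two-point case.
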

\begin{proof}
We will use the usual strategy based on $c-$cyclical monotonicity, i.e. the fact that any optimal $\gamma$ is concentrated on a cyclical monotone set $\Gamma$, where we can also assume $\phi(y-x)<+\infty$. This means that $(x,y),(x',y')\in\Gamma$ implies
\begin{equation}\label{c-c}
\phi(y-x)+\phi(y'-x')\leq \phi(y'-x)+\phi(y-x').
\end{equation}
We only need to show that this implies (in the strictly convex case) a monotone behavior : we will actually deduce from \eqref{c-c} that $x<x'$ implies $y\leq y'$. This allows to say that $\Gamma$ is included in the graph of a monotone (nondecreasing) multifunction. Hence, up to a countable set, for every $x$ there is unique image $y=T(x)$, and the map $T$ is nondecreasing.

To prove $y\leq y'$ suppose by contradiction $y>y'$ and denote $a=y-x$, $b=y'-x'$ and $c=x'-x$. 
Condition \eqref{c-c} reads as $\phi(a)+\phi(b)\leq \phi(b+c)+\phi(a-c)$. Moreover, we have
$$b+c=(1-t)b+ta,\quad a-c=tb+(1-t)a,\quad \mbox{ for }t=\frac{c}{a-b}.$$
The assumption $y'<y$ reads as $b+c<a$, which gives $t\in]0,1[$ (since it implies $b<a$, and hence $t>0$, and $c<a-b$, and hence $t<1$). Thus, convexity yields
\begin{eqnarray*}
\phi(a)+\phi(b)&\leq& \phi(b+c)+\phi(a-c)\\
&\leq& (1-t)\phi(b)+t\phi(a)+t\phi(b)+(1-t)\phi(a)=\phi(a)+\phi(b).
\end{eqnarray*}
Since we assumed $\phi(a),\phi(b)<+\infty$, we also deduce  $\phi(b+c),\phi(a-c)<+\infty$ and the strict convexity implies a strict inequality, thus getting to a contradiction.

This shows that a strictly convex cost always admits a unique optimal transport plan, which is given by the monotone nondecreasing transport map. The statement when $\phi$ is only convex is obtained by approximation through $\phi(y-x)+\varepsilon |y-x|^2$ (or through any other strictly convex approximation).
\end{proof}

It is important to notice that in this one-dimensional convex framework the optimal transport does not really depend on the cost. This is very useful for approximation procedures (if, for instance, one considers costs of the form $|y-x|^p$ and lets $p\to\infty$ it is possible to deduce the optimality of the same nondecreasing transport for the $L^\infty$ transportation problem, see again \cite{Winfty}). Not only, in our case when $\phi(z)=|z|^2$ for $z\in C$ and $\phi=+\infty$ outside $C$, this shows that the optimal transport is not really affected by the constraint (differently from the example in $\R^2$ that we gave in the introduction). What is affected by the constraint is the possibility of transporting $f_0$ onto $f_1$ {\it at finite cost}. Depending on the two measures, it is possible that the nondecreasing map $T$ satisfies $T(x)-x\in C$ or not. If yes, then it is optimal; if not, then the problem has no solution with a finite cost.

\section{Proof of Theorem \ref{THM}}
\subsection{Preliminary results}
Let $\gamma$ be an optimal transport map for $(MK)$.  We recall that when the cost is l.s.c. (which is the case of our cost function $c$), then any optimal $\gamma$ is concentrated on a $c$-cyclically monotone set $\Gamma$.  This means in particular
$$(x_0,y_0),(x_1,y_1)\in\Gamma\impl c(x_0,y_0)+c(x_1,y_1)\leq c(x_1,y_0)+c(x_0,y_1)$$
(we refer again to \cite{ambrosio} for the actual definition of $c-$cyclically monotone sets, which imposes a similar condition for families of $k$ points in $\Gamma$ and arbitrary permutations of their second coordinates; here we only need the condition for $k=2$). In our case, thanks to the definition of $c$, if we use the equivalence
$$|x_0-y_0|^2+|x_1-y_1|^2\leq  |x_0-y_1|^2+|x_1-y_0|^2\Leftrightarrow \lan x_1-x_0\,,\,y_1-y_0\ran\geq 0$$
then we get
\begin{equation}\label{monotone} 
\left.
\begin{array} {c}
\mbox{ If $(x_0,y_0),\ (x_1,y_1)\in \Gamma$ are such that:}\\
y_0-x_1\in C\mbox{ and }y_1-x_0\in C\\ 
\end{array}
\right\}\Rightarrow
\lan y_1-y_0,x_1-x_0 \ran\ge 0.
\end{equation}
Without loss of generality, we may assume that $\Gamma$ is $\sigma$-compact (which is useful for measurability reasons) and that any $(x,y)\in \Gamma$ is such that $y-x\in C$.\\
Arguing as in \cite{Winfty},  we introduce the following object:
\begin{defi}
For any $r>0$ and any $y\in \R^d$, we set:
$$\Gamma^{-1}(\overline{B(y,r)}):=\pi_x(\Gamma\cap (\R^d \times \overline{B(y,r)}))$$
where $B(y,r)$ denotes the open ball of radius $r>0$ centered at $y$. The assumption of $\sigma-$compactness on $\Gamma$ guarantees that the set that we just defined is measurable.
\end{defi}

We recall the following results extracted from \cite{Winfty} and \cite{CD1}:
\begin{lem}\label{lemCD}
The measure $\gamma$ is concentrated on a $\sigma$-compact set $R(\Gamma)$ such that for all $(x_0,y_0)$ in $R(\Gamma)$ and for all $r>0$, the point $x_0$ is a Lebesgue point of $\Gamma^{-1}(B(y_0,r))$, i.e.:
$$\lim_{\e\to 0} {\vert \Gamma^{-1}(\overline{B(y_0,r)})\cap B(x_0,\e)\vert \over \vert  B(x_0,\e)\vert}=1.$$
\end{lem}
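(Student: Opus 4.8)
The plan is to combine the Lebesgue density theorem with a monotonicity property of the multifunction $\Gamma^{-1}$ and a countability reduction. The crucial elementary fact is that $B\mapsto \Gamma^{-1}(B)=\pi_x(\Gamma\cap(\R^d\times B))$ is monotone for inclusion: if $B\subseteq B'$ then $\Gamma^{-1}(B)\subseteq \Gamma^{-1}(B')$. Hence density at a point is inherited upward: if $x_0$ is a Lebesgue density point of $\Gamma^{-1}(B)$ and $B\subseteq B'$, then $x_0$ is automatically a density point of $\Gamma^{-1}(B')$, because the density ratio of $\Gamma^{-1}(B')$ on $B(x_0,\e)$ is squeezed between that of $\Gamma^{-1}(B)$ (which tends to $1$) and $1$. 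This means it suffices to control a countable family of balls lying \emph{inside} the target balls $\overline{B(y_0,r)}$.

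First I would fix a countable family $\{B_n\}_{n\in\N}$ consisting of all open balls with rational centre and rational radius. Each set $A_n:=\Gamma^{-1}(B_n)$ is measurable, being the projection of the $\sigma$-compact set $\Gamma\cap(\R^d\times B_n)$. By the Lebesgue density theorem, the set $N_n$ of points of $A_n$ that fail to be density points of $A_n$ is Lebesgue-negligible, so, since $f_0\ll dx$, it is also $f_0$-negligible. Setting $E:=\bigcup_n N_n$ we get $f_0(E)=0$.

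Next I would use inner regularity of the Radon measure $\gamma$ to extract an increasing sequence of compact sets $K_j\subseteq \Gamma\setminus(E\times\R^d)$ with $\gamma\big(\bigcup_j K_j\big)=1$ (possible because $\gamma(\{(x,y):x\in E\})=f_0(E)=0$), and define $R(\Gamma):=\bigcup_j K_j$. This set is $\sigma$-compact, carries all the mass of $\gamma$, and by construction every $(x_0,y_0)\in R(\Gamma)$ satisfies $x_0\notin N_n$ for all $n$. To conclude, fix $(x_0,y_0)\in R(\Gamma)$ and $r>0$. Using that rational balls form a basis, choose $B_n=B(z_n,\rho_n)$ with $z_n,\rho_n$ rational, $\abs{z_n-y_0}<\rho_n$ and $\rho_n+\abs{z_n-y_0}\le r$; then $y_0\in B_n$ and $B_n\subseteq B(y_0,r)\subseteq\overline{B(y_0,r)}$. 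Since $(x_0,y_0)\in\Gamma$ and $y_0\in B_n$ we have $x_0\in A_n$, and since $x_0\notin N_n$, the point $x_0$ is a density point of $A_n$. By the monotonicity and squeezing argument above, using $A_n\subseteq \Gamma^{-1}(\overline{B(y_0,r)})$, the point $x_0$ is a density point of $\Gamma^{-1}(\overline{B(y_0,r)})$, which is exactly the claimed limit.

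The only genuinely delicate points are the uniformity and the $\sigma$-compactness. A naive approach would produce, for each pair $(y_0,r)$, a different exceptional null set, and one cannot take an uncountable union of null sets; the role of the countable basis together with the upward inheritance of density is precisely what reduces everything to the single $f_0$-null set $E$. Likewise, the $\sigma$-compactness of $R(\Gamma)$ must be obtained through inner regularity rather than by simply removing $E$, since $\Gamma\setminus(E\times\R^d)$ need not itself be $\sigma$-compact.
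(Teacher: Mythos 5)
Your proof is correct. Note that the paper itself gives no proof of this lemma: it is quoted as a known result from \cite{Winfty} and \cite{CD1}, and your argument --- reduction to a countable basis of rational balls, the Lebesgue density theorem applied to each $\Gamma^{-1}(B_n)$, removal of a single $f_0$-null exceptional set, recovery of $\sigma$-compactness by inner regularity, and the upward inheritance of density under inclusion --- is precisely the standard argument given in those references.
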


\begin{prop}\label{PropGT}
Let $(x_0,y_0)\in R(\Gamma)$, let $r>0$, $\delta\in ]0,1[$ and $\xi$ a unit vector in $\R^d$, then for  any $\e>0$  the following set has a positive Lebesgue measure:
$$\Gamma^{-1}(\overline{B(y_0,r)})\cap B(x_0,\e)\cap C(x_0,\xi,\delta),$$
where the set  $C(x_0,\xi,\delta)$ is the following convex cone:
$$\{x:\ \lan x-x_0, \xi\ran\ >(1-\delta)\vert x-x_0\vert\}$$ 
(notice that it is an open cone, with $x_0\notin  C(x_0,\xi,\delta)$).
\end{prop}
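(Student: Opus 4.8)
The plan is to prove Proposition \ref{PropGT} by combining the Lebesgue density property from Lemma \ref{lemCD} with the geometric fact that a convex cone has a definite proportion of the volume of any ball centered at its apex. The statement asserts that the set $\Gamma^{-1}(\overline{B(y_0,r)})$, which we already know has full density at $x_0$, must intersect in positive measure with any small ball $B(x_0,\e)$ restricted to any open cone $C(x_0,\xi,\delta)$. The key insight is that density-one sets cannot avoid any fixed-aperture cone, because the cone itself occupies a fixed positive fraction of each ball around its apex.

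First I would fix the data $(x_0,y_0)\in R(\Gamma)$, the radius $r$, the parameters $\delta\in\,]0,1[$ and $\xi$, and a given $\e>0$. I would denote for brevity $A=\Gamma^{-1}(\overline{B(y_0,r)})$, which by Lemma \ref{lemCD} satisfies
$$\lim_{\eta\to 0}\frac{|A\cap B(x_0,\eta)|}{|B(x_0,\eta)|}=1.$$
Next I would compute the volume fraction occupied by the cone: since $C(x_0,\xi,\delta)$ is a cone with apex $x_0$ of fixed aperture, there is a constant $\kappa=\kappa(\delta,d)>0$, independent of the scale, such that
$$\frac{|C(x_0,\xi,\delta)\cap B(x_0,\eta)|}{|B(x_0,\eta)|}=\kappa\qquad\text{for every }\eta>0.$$
This is simply the normalized solid angle subtended by the cone and it does not depend on $\eta$ by scaling invariance.

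The core of the argument is then a complementation estimate. For any measurable set $E$ one has
$$|C(x_0,\xi,\delta)\cap B(x_0,\eta)|\leq |A\cap C(x_0,\xi,\delta)\cap B(x_0,\eta)|+|B(x_0,\eta)\setminus A|.$$
Dividing by $|B(x_0,\eta)|$ and using the two facts above yields
$$\kappa\leq \frac{|A\cap C(x_0,\xi,\delta)\cap B(x_0,\eta)|}{|B(x_0,\eta)|}+\frac{|B(x_0,\eta)\setminus A|}{|B(x_0,\eta)|}.$$
As $\eta\to 0$ the last term tends to $0$ by the density property, so for all sufficiently small $\eta$ the first term is at least $\kappa/2>0$; in particular $|A\cap C(x_0,\xi,\delta)\cap B(x_0,\eta)|>0$. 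Choosing $\eta\leq\e$ gives the desired conclusion, since $B(x_0,\eta)\subset B(x_0,\e)$.

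The argument is essentially a soft measure-theoretic one and I do not expect a serious obstacle; the only point requiring a little care is the scale-invariance of the cone's volume fraction, which must be stated for the \emph{open} cone omitting its apex (this is harmless, as the apex is a single point of measure zero). One should also note that the conclusion is obtained for all small enough $\eta$, and then one uses the monotonicity of the family of sets in $\e$ to conclude for the given $\e$. The fact that $A$ is measurable (guaranteed by the $\sigma$-compactness of $\Gamma$) is what makes the complementation estimate meaningful.
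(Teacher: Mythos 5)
Your proof is correct. The paper itself does not prove this proposition --- it simply recalls it from the cited works of Champion--De Pascale--Juutinen --- and your argument (full density of $\Gamma^{-1}(\overline{B(y_0,r)})$ at $x_0$ from Lemma \ref{lemCD}, combined with the scale-invariant positive volume fraction $\kappa(\delta,d)$ of the open cone in balls centered at its apex, and the complementation estimate) is exactly the standard density-point argument used there. The only blemish is the stray phrase ``for any measurable set $E$'' where $E$ is never used; the inequality you actually write is the correct one.
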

\begin{lem}\label{interieur1}
Let $x\in C$, $\xi$ a unit vector $\eta>0$ such that $x+\eta\xi\in {\rm Int}(C)$. Let $0<\eta'<\eta$ be fixed.
Then there exist $r>0$ and $\delta\in ]0,1[$  such that:
$$\forall\ y\in B(x,r)\cap C,\quad \left(C(y,\xi,\delta)\cap \overline{ B(y,\eta')}\right)\subset {\rm Int}(C).$$
\end{lem}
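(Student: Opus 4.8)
The plan is to exploit the elementary convexity fact that if $C$ is convex, $p\in C$ and $q\in{\rm Int}(C)$, then $(1-\lambda)p+\lambda q\in{\rm Int}(C)$ for every $\lambda\in(0,1]$: indeed, if $B(q,s)\subset C$ then $B((1-\lambda)p+\lambda q,\lambda s)=(1-\lambda)p+\lambda B(q,s)\subset C$ by convexity, so its center is interior. Since $x+\eta\xi\in{\rm Int}(C)$, I first fix $s>0$ with $B(x+\eta\xi,s)\subset{\rm Int}(C)$.

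The geometric picture is that from the apex $x$ the open ``ice-cream cone''
$$G_x:=\bigcup_{\lambda\in(0,1]}B\bigl(x+\lambda\eta\xi,\lambda s\bigr)=\{(1-\lambda)x+\lambda b:\ \lambda\in(0,1],\ b\in B(x+\eta\xi,s)\}$$
is contained in ${\rm Int}(C)$ by the fact above. This cone has axis $\xi$, opens with half-angle $\arctan(s/\eta)$, and reaches its base ball at $\lambda=1$. A point $z=x+\rho u$ (with $|u|=1$, $\langle u,\xi\rangle=\cos\theta$) satisfying $\rho\le\eta'$ and small angle $\theta$ belongs to $G_x$: taking $\lambda=\rho\cos\theta/\eta\in(0,\eta'/\eta]\subset(0,1)$ one computes $|z-(x+\lambda\eta\xi)|=\rho\sin\theta$, so $z\in G_x$ as soon as $\rho\sin\theta<\lambda s$, i.e. $\tan\theta<s/\eta$. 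This already proves the claim for $y=x$ upon choosing $\delta$ with $\cos\theta>1-\delta\impl\tan\theta<s/\eta$.

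To obtain the uniform statement I repeat the construction with the apex moved to an arbitrary $y\in B(x,r)\cap C$. Since $y\in C$, the tilted cone
$$G_y:=\bigcup_{\lambda\in(0,1]}B\bigl(y+\lambda(\eta\xi+(x-y)),\lambda s\bigr)\subset{\rm Int}(C)$$
again by the fact above, its base ball being still $B(x+\eta\xi,s)$. Given $z=y+\rho u\in C(y,\xi,\delta)\cap\overline{B(y,\eta')}$, I set $\lambda=\rho\cos\theta/\eta\in(0,1)$ and estimate, writing $e=x-y$ with $|e|<r$,
$$\bigl|z-\bigl(y+\lambda(\eta\xi+e)\bigr)\bigr|\le|\rho u-\lambda\eta\xi|+\lambda|e|=\rho\sin\theta+\lambda|e|<\lambda s,$$
the last inequality holding as soon as $\rho\sin\theta<\lambda(s-r)$, i.e. $\tan\theta<(s-r)/\eta$. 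Hence it suffices to fix any $r\in(0,s)$ and then choose $\delta\in(0,1)$ so small that $\cos\theta>1-\delta$ forces $\tan\theta<(s-r)/\eta$; with these $r,\delta$ every such $z$ lies in $G_y\subset{\rm Int}(C)$, which is exactly the conclusion.

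The only genuine difficulty is the perturbation of the apex: moving from $x$ to $y$ tilts the axis of the good cone by the vector $x-y$, and this tilt must be absorbed without letting the cone leave ${\rm Int}(C)$. This is precisely what the requirement $r<s$ (so the base ball $B(x+\eta\xi,s)$ keeps room to spare) together with the sharpened angle bound $\tan\theta<(s-r)/\eta$ achieves. The hypothesis $\eta'<\eta$ enters crucially to keep $\lambda=\rho\cos\theta/\eta\le\eta'/\eta$ strictly below $1$, so that $z$ stays within the finite extent of the cone and never reaches its base; everything else reduces to the routine planar-angle computation above.
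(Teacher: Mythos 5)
Your proof is correct, and it takes a genuinely different route from the paper's. The paper proves the inclusion by contradiction and compactness: it first establishes the statement only on the annulus $\overline{B(y,\eta')}\setminus B(y,\eta'/2)$ (extracting sequences $y_n\to x$, $\delta_n\to 0$, $z_n\notin{\rm Int}(C)$ and showing the limit $z$ must lie on the open segment from $x$ to $x+\eta\xi$, hence in ${\rm Int}(C)$), and then recovers the full cone by passing to convex envelopes. You instead argue directly and quantitatively: from the elementary fact that $(1-\lambda)p+\lambda B(q,s)\subset{\rm Int}(C)$ for $p\in C$, $B(q,s)\subset C$, you build the explicit ``ice-cream cone'' $G_y$ with apex $y$ and base ball $B(x+\eta\xi,s)$, and check by a clean planar computation ($|\rho u-\rho\cos\theta\,\xi|=\rho\sin\theta$, plus the tilt term $\lambda|e|$) that the truncated cone $C(y,\xi,\delta)\cap\overline{B(y,\eta')}$ sits inside $G_y$ once $r<s$ and $\tan\theta<(s-r)/\eta$. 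I verified the key estimates: the choice $\lambda=\rho\cos\theta/\eta\le\eta'/\eta<1$ is legitimate (and $\lambda>0$ since the cone is open so $\rho>0$ and $\cos\theta>1-\delta>0$), and the triangle-inequality bound $\rho\sin\theta+\lambda|e|<\lambda s$ does follow from your angle condition. What your approach buys is explicit constants ($r$ any value in $(0,s)$, $\delta$ determined by the angle bound) and no compactness argument, so it would also work without modification in an infinite-dimensional setting where the paper's sequential extraction would need more care; the paper's argument is shorter to write but non-constructive.
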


\noindent The direction $\xi$ is called an entering direction at the point $x$.\\
\begin{proof}  First of all notice that, if $x+\eta\xi\in {\rm Int}(C)$, then all the interior points of the segment connecting $x$ to $x+\eta\xi$ must belong to $ {\rm Int}(C)$.

We prove the existence of $r>0$ and and $\delta\in ]0,1[$  such that:
\begin{equation}\label{couronne}
\forall\ y\in B(x,r)\cap C,\quad \left(C(y,\xi,\delta)\cap\left[ \overline{ B(y,\eta')}\setminus B(y,\eta'/2)\right]\right)\subset {\rm Int}(C).
\end{equation}
If we prove \eqref{couronne}, the result then follows by passing to the convex envelope. 
The thesis (\ref{couronne}) may be proven by contradiction. Suppose that it is false, and find a sequence $y_n\in C$ such that $y_n\to x$, together with a sequence $z_n\notin {\rm Int}(C)$ and $z_n\in \left(C(y_n,\xi,\delta_n)\cap\left[\overline{ B(y_n,\eta')}\setminus B(y_n,\eta'/2)\right]\right)$, for $\delta_n\to 0$.
Up to subsequence we may suppose $z_n\to z$. This limit point $z$ must satisfy  $z\notin {\rm Int}(C)$ (since $ {\rm Int}(C)$ is open) and $|z-x|\in [\eta'/2,\eta']$. Moreover, from $\lan z_n-y_n,\xi\ran\geq (1-\delta_n)|z_n-y_n|$, we get $\lan z-x,\xi\ran\geq|z-x|$, i.e. $z-x=\lambda\xi$ for $\lambda\geq 0$. Then, we get $\lambda\in  [\eta'/2,\eta']$ and finally we obtain a contradiction since $z$ belongs to the interior of the segment connecting $x$ to $x+\eta\xi$, but not  to ${\rm Int}(C)$.
\end{proof}

\subsection{The proof in dimension $d>1$}
We start proving a fundamental lemma:
\begin{lem}\label{central}
Let $(x_0,y_0)$ and $(x_0,y_0')$ two points of $R(\Gamma)$. We assume
$$\{(1-t)(y_0-x_0)+t(y_0'-x_0):\ t\in [0,1]\}\cap {\rm Int}(C)\not =\emptyset.$$ 
Then $y_0=y_0'$.
\end{lem}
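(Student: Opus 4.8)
The plan is to argue by contradiction. Suppose $y_0\neq y_0'$ and set $u:=y_0'-y_0\neq 0$, together with the two displacements $v:=y_0-x_0$ and $v':=y_0'-x_0$, both of which lie in $C$ since $(x_0,y_0),(x_0,y_0')\in R(\Gamma)\subset\Gamma$. By hypothesis the segment $[v,v']=\{(1-t)v+tv':t\in[0,1]\}$ meets ${\rm Int}(C)$ at some point $w$. The idea is to use Proposition \ref{PropGT} to produce two points $(x_1,y_1),(x_2,y_2)\in\Gamma$ with $x_1,x_2$ very close to $x_0$, $y_1$ close to $y_0$ and $y_2$ close to $y_0'$, chosen so that $x_1-x_0$ and $x_2-x_0$ point in opposite directions along $u$, and then to reach a contradiction with the monotonicity inequality \eqref{monotone} applied to this pair.

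The first step is geometric. Since $w\in{\rm Int}(C)$ lies on $[v,v']$, the points $v+tu$ for small $t>0$ lie in ${\rm Int}(C)$ (either because $v\in{\rm Int}(C)$, or, if $v\in{\rm r}\partial C$, by convexity of $C$ joining $v$ to the interior point $w$); hence $\xi:=u/|u|$ is an entering direction at $v$, and symmetrically $-\xi$ is an entering direction at $v'$. I would then apply Lemma \ref{interieur1} twice, at $v$ with direction $\xi$ and at $v'$ with direction $-\xi$, and keep common constants $r_0>0$, $\delta_0\in\,]0,1[$ and a radius $\eta'>0$ such that $C(\tilde y,\xi,\delta_0)\cap\overline{B(\tilde y,\eta')}\subset{\rm Int}(C)$ for every $\tilde y\in B(v,r_0)\cap C$, and likewise at $v'$ with $-\xi$. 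Finally I fix the narrower aperture $\delta:=\delta_0/2$ and a scale $\e\le\min\{\eta'/2,\,r_0/2\}$.

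Next I would select the points. Applying Proposition \ref{PropGT} to $(x_0,y_0)$ with direction $\xi$ and to $(x_0,y_0')$ with direction $-\xi$ (radius $r$, aperture $\delta$, scale $\e$) gives, for every $r>0$, points $x_1\in\Gamma^{-1}(\overline{B(y_0,r)})\cap B(x_0,\e)\cap C(x_0,\xi,\delta)$ and $x_2\in\Gamma^{-1}(\overline{B(y_0',r)})\cap B(x_0,\e)\cap C(x_0,-\xi,\delta)$, with associated $y_1\in\overline{B(y_0,r)}$ and $y_2\in\overline{B(y_0',r)}$. The hard part, and the only delicate point, is to check that the crossed displacements $y_1-x_2$ and $y_2-x_1$ belong to $C$, so that \eqref{monotone} applies; this is exactly where the boundary behaviour of $C$ could a priori ruin the argument, since $v,v'$ may lie on ${\rm r}\partial C$. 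I would get around this by writing $y_1-x_2=(y_1-x_1)+(x_1-x_2)$ and using as base point the genuine element $y_1-x_1\in C$ (recall $(x_1,y_1)\in\Gamma$) rather than the possibly boundary point $v$: it lies in $B(v,r_0)\cap C$ as soon as $r+\e\le r_0$; the increment $x_1-x_2$ is a sum of two vectors of the convex cone $\{z:\lan z,\xi\ran\ge(1-\delta)|z|\}$ (namely $x_1-x_0$ and $-(x_2-x_0)$), hence lies in it and so in $C(0,\xi,\delta_0)$ since $\delta<\delta_0$, with $|x_1-x_2|<2\e\le\eta'$. Lemma \ref{interieur1} then yields $y_1-x_2\in{\rm Int}(C)\subset C$, and the symmetric computation at $v'$ with direction $-\xi$ gives $y_2-x_1\in C$.

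With both crossed displacements in $C$, inequality \eqref{monotone} applied to $(x_1,y_1),(x_2,y_2)$ gives $\lan y_2-y_1,x_2-x_1\ran\ge 0$. On the other hand, writing $y_2-y_1=u+\big((y_2-y_0')-(y_1-y_0)\big)$ with an error of norm at most $2r$, and using that $x_1-x_0$ and $x_2-x_0$ sit in the opposite cones $C(x_0,\pm\xi,\delta)$, so that $\lan x_2-x_1,\xi\ran\le-(1-\delta)|x_2-x_1|$, I obtain
$$\lan y_2-y_1,x_2-x_1\ran\le\big(2r-(1-\delta)|u|\big)\,|x_2-x_1|.$$
Choosing $r<(1-\delta)|u|/2$ (compatible with all previous smallness requirements, since $\delta,\eta',r_0,\e$ were fixed independently of $r$) makes the right-hand side strictly negative, contradicting $\lan y_2-y_1,x_2-x_1\ran\ge 0$. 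Hence $y_0=y_0'$.
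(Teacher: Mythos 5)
Your argument is correct and follows the same Champion--De Pascale--Juutinen density-point strategy as the paper's own proof, relying on exactly the same ingredients: entering directions via Lemma \ref{interieur1}, Proposition \ref{PropGT} to produce nearby points of $\Gamma$ inside prescribed cones, and a contradiction with the monotonicity condition \eqref{monotone}. The only difference is cosmetic: the paper perturbs only $(x_0,y_0)$ and tests monotonicity against $(x_0,y_0')$ itself, while you perturb both points symmetrically in opposite directions; the extra steps this requires (the sum of two vectors of the circular cone staying in the cone, and the final sign estimate with the $2r$ error term) are all sound.
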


\begin{proof}
To show this lemma, we proceed by contradiction and adapt the method developed in \cite{Winfty} to our case.
 Let $(x_0,y_0)$ and $(x_0,y_0')$ two points of $R(\Gamma)$ satisfying the lemma's assumption with $y_0\not =y_0'$. \\
We set  $\xi:={y_0'-y_0\over \vert y_0'-y_0\vert}$. The direction $\xi$ is an entering direction at $y_0-x_0$ and $-\xi$ is an entering direction at $y_0'-x_0$.
Using  lemma \ref{interieur1}, we take  $r>0$, $\rho \in]0,1[$ and $\e>0$, such that:
\begin{equation}\label{blue2}
\left(C(y'_0-x_0,-\xi,\rho)\cap \overline{B(y_0'-x_0,\e)}\right)\subset {\rm Int}(C),
\end{equation}
\begin{equation}\label{pink}
\forall z\in B(y_0-x_0,r+\e),\quad \left(C(z,\xi,\rho)\cap \overline{B(z,\e)}\right)\subset {\rm Int}(C). 
\end{equation}
Moreover, if $\rho$ and $r$ are small enough, we also have %
\begin{equation}\label{Decroissance}
\forall x\in C(x_0,\xi,\rho),\ \forall y\in \overline{B(y_0,r)}:\quad \lan x-x_0,y-y_0' \ran <0.
\end{equation} 
Indeed the direction of $x-x_0$ is almost that of $\xi$ (up to an error $\rho$) and that of $y-y_0'$ almost $-\xi$ (up to an error of order $r/|y_0-y'_0|$). The two vectors being opposite, it is clear that if both $\delta$ and $r$ are small then the scalar product stays negative.
%
\noindent By use of Proposition \ref{PropGT}, we 
get the existence of a couple $(x,y)\in \Gamma$ such that $x\in \overline {B(x_0,\e)}\cap C(x_0,\xi,\rho)$ and $y\in \overline{B(y_0,r)}$.\\
Then the couple $(x,y)\in \Gamma$ satisfies:
\begin{itemize}
\item $y_0'-x=y_0'-x_0+(x_0-x)\in C$, as a consequence of (\ref{blue2}),
\item  $\lan x-x_0,y-y_0' \ran <0,$ by \eqref{Decroissance}.
\end{itemize}
To get a contradiction with (\ref{monotone}), it remains only to show $y-x_0\in C$.
This comes from (\ref{pink}). Indeed $y-x=(y_0-x_0)+(x_0-x)+(y-y_0)$ is in $B(y_0-x_0,r+\e)$ and $x-x_0\in (C(0,\xi,\rho)\cap \overline{B(0,\e)})$ so that
$$y-x_0=(y-x)+(x-x_0)\in {\rm Int} C.\qedhere$$ 
\end{proof}
We easily deduce
\begin{coro}
If $C$ is strictly convex, then any solution of $(MK)$ is supported on a graph. \\
Moreover, if $C$ is not strictly convex, any solution of $(MK)$ is supported on a set $R(\Gamma)$ satisfying the following property:\\
If $(x_0,y_0)$ and $(x_0,y_0')$ are in $R(\Gamma)$, then $(y_0-x_0)$ and $(y_0'-x_0)$ belong to a same flat part of $C$.
\end{coro}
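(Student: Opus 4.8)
The plan is to read both assertions directly off Lemma \ref{central} via its contrapositive. Fix $x_0$ and suppose $(x_0,y_0),(x_0,y_0')\in R(\Gamma)$. By the normalization made on $\Gamma$ (every $(x,y)\in\Gamma$ has $y-x\in C$), both displacements $v:=y_0-x_0$ and $v':=y_0'-x_0$ lie in $C$, so by convexity the whole segment $S:=[v,v']$ is contained in $C$. If $y_0\neq y_0'$, then Lemma \ref{central} forbids $S$ from meeting ${\rm Int}(C)$; hence $S\subset C\setminus{\rm Int}(C)$, which for a closed convex body equals its relative boundary ${\rm r}\partial C$. Since $v\neq v'$, the segment $S$ is a convex subset of $C$ with more than one point contained in ${\rm r}\partial C$, i.e. a \emph{straight part} of $C$ in the sense of the Definition of the previous section. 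This single observation drives everything.

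For the first assertion I would argue by contradiction. Assume $C$ is strictly convex and that some $x_0$ carries two distinct images $y_0\neq y_0'$; by the paragraph above, $S$ is then a straight part. The key convex-geometric input is that \emph{every straight part is contained in a maximal flat part}. Since a maximal flat part is itself a flat part (use the two-term chain $F\subset C$ in the definition), the mere existence of $S$ produces a flat part of $C$, contradicting strict convexity. Hence each $x_0$ admits at most one image, so $R(\Gamma)$ is contained in the graph of a map $T$; as $R(\Gamma)$ is $\sigma$-compact (Lemma \ref{lemCD}), the section map $T$ is Borel and $\gamma=({\rm Id}\times T)_\sharp f_0$, which is what ``supported on a graph'' means.

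For the second assertion the same picture is simply not pushed to a contradiction. If $y_0=y_0'$ there is nothing to prove; otherwise $S$ is again a straight part, and any maximal flat part $F\supset S$ is a flat part of $C$ containing both $v=y_0-x_0$ and $v'=y_0'-x_0$. This is exactly the stated property, and it is the base case that feeds the induction of subsection 4.2, where one descends into $F$ and recurses (the Remark after Theorem \ref{THM} guarantees $F$ is again a convex set in a lower-dimensional space satisfying assumption ii)).

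I expect the only non-formal point — the main obstacle — to be the convex-geometry lemma ``every straight part lies in a maximal flat part''. I would establish existence by Zorn's lemma applied to the family of straight parts containing $S$, ordered by inclusion: the union of any chain is convex, has more than one point, and remains inside ${\rm r}\partial C$, hence is again a straight part and an upper bound, so a maximal element exists and is maximal among all straight parts, i.e. a maximal flat part (the bound $\dim(F)\le d-1$ keeps the dimensions under control). A secondary, purely bookkeeping point is the identity $C\setminus{\rm Int}(C)={\rm r}\partial C$, which is literal only when $\dim(C)=d$; when $\dim(C)<d$ one first restricts to the affine hull of $C$ and reads ${\rm Int}(C)$ as ${\rm ri}\,C$, after which the argument and the flat-part structure go through verbatim.
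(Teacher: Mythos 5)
Your proof is correct and is exactly the deduction the paper intends (the paper offers no argument, saying only ``We easily deduce'' after Lemma \ref{central}): the contrapositive of that lemma places the segment of displacements in $C\setminus{\rm Int}(C)$, making it a straight part, and completing it to a maximal flat part (which is a flat part via the two-term chain) settles both assertions. Your two supporting remarks --- the Zorn argument that every straight part lies in a maximal flat part, and the caveat about reading ${\rm Int}(C)$ as ${\rm ri}\,C$ when $\dim(C)<d$ --- are careful fillings-in of details the paper leaves implicit, not deviations from its route.
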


To prove Theorem \ref{THM} itself, we proceed by induction on the dimension $d$ of the space. The theorem in dimension $d=1$ has already been  proved. Let us show that if the theorem holds in any dimension lower or equal to $d-1$, it also holds in dimension $d$.\\

\noindent Denote by $C_0$ the set of points of $C$ which do not belong to any flat part of $C$, and by $(C_i)_{i\in I}$ the family of the  maximal flat parts of $C$. Let $\gamma$ an optimal transport plan and $R(\Gamma)$ defined as previously so that $\gamma$ is concentrated on $R(\Gamma)$. We set 
$$\Gamma_i:=\{(x,y):\ y-x\in C_i\}\cap R(\Gamma),\  \forall i\in I\cup\{0\}.$$
By Lemma \ref{central}, the measure $\gamma\res \Gamma_0$ is concentrated on a graph, and, for any $i\in I$, we have the following property:
$$(x,y)\in \Gamma_i\Rightarrow (\{(x,y'):\quad (x,y')\in R(\Gamma)\}\subset  \Gamma_i).$$
Then it is enough to show that for  any $i\in I$, $\gamma_i:=\gamma\res \Gamma_i$ is concentrated on a graph and to glue the results together. Each measure $\gamma_i$ is an optimal transport plan for the same cost $c$ between its marginals. The assumption that $I$ is at most countable allows to decompose $\gamma$ into a sum of these measures, thus avoiding the need for disintegration results. In particular, it is clear that the key assumption that $f_0$ is absolutely continuous stays true for the first marginal of $\gamma_i$ (which would not be easy in case of a more-than-countable disintegration).

\noindent Let us fix $i\in I$, let $n\le d-1$ be the dimension of $C_i$ and $z$ a fixed vector in $C_i$. Up to a change of variables, we may assume 
$${\rm Span}(C_i-z)=  {\rm Span}(e^1,....e^n)$$
where ${(e^1,...,e^d)}$ is the canonic base of $\R^d$. \\
{\bf Notations:}
We denote:
$${\x_1:=(x^1,...,x^n)},\   {\x_{2}:=(x^{n+1},...,x^d)},\     \z_2=(z^{n+1},...,z^d).$$
$$\hat{\gamma}:=\gamma\res \Gamma_i,\        \hat f_0:=\pi_x\sharp \hat\gamma,\   \hat f_1:=\pi_y\hat \gamma.$$ 
Note that $y-x\in C_i$ implies $\y_2-\x_2=\z_2$ so $\hat{\gamma}$ is concentrated on $$\{(x, (\y_1,\x_2+\z_2)):\ x=(\x_1,\x_2)\in\R^d,\ \y_1\in \R^d\}.$$
Making a disintegration of $\hat\gamma$, $\hat f_0$, $\hat f_1$ we write:
$$\hat\gamma(x,(\y_1,\x_2+\z_2))=\hat \mu^{\x_2}(\x_1,\y_1)\otimes\zeta(\x_2,\y_2),\quad \mbox{ with }\zeta:=\pi_{\x_2,\y_2}\sharp\hat \gamma,  $$
$$\hat f_0=\hat f_0^{\x_2}(\x_1) \otimes\left(\pi_{\x_2}\sharp\hat f_0\right)(\x_2),\  \hat f_1=\hat f_1^{\y_2}(\y_1) \otimes\left(\pi_{\y_2}\sharp\hat f_1\right)(\y_2).$$
Notice moreover that the measure $\zeta$ is concentrated on the pairs of the form $(\x_2,\x_2+\z_2)$. Then we have:
\begin{eqnarray*}
\int c(x,y)\ d\hat \gamma(x,y)&=& \int \vert \x_1-\y_1\vert^2\ d\hat \gamma(x,y)+ \vert \z_2\vert^2\gamma(\Gamma_i)\\
&=&\int \vert \x_1-\y_1\vert^2\ d\hat \mu^{\x_2}(\x_1,\y_1)d\zeta(\x_2,\x_2+\z_2) + \vert \z_2\vert^2\gamma(\Gamma_i).
\end{eqnarray*}
We set: $\hat c(\x_1,\y_1)= \left\{\begin{array}{l l}
\vert \x_1-\y_1\vert^2&\mbox{ if } (\y_1-\x_1,\z_2)\in C_i, \\
+\infty & \mbox{ otherwise.}
\end{array}
\right.$

\begin{lem}
$\zeta$- almost every $(\x_2,\x_2+\z_2)$, we have:
\begin{itemize}
\item[i)] $\hat f_0^{\x_2}= \pi_{\x_1}\sharp \hat \mu^{\x_2},\mbox{ and }  \hat f_1^{\y_2}= \pi_{\y_1}\sharp \hat \mu^{\x_2},$
moreover  $\hat f_0^{\x_2}$ is absolutely continuous with respect to the Lebesgue measure,
\item[ii)] the measure $\hat \mu^{\x_2}$ on $\R^d\times\R^d$ is a solution of the following 
transport problem:
$$(P^{\x_2})\quad\min\left\{\int  \hat c(\x_1,\y_1)\ d\pi(\x_1,\y_1):\     \pi\in \Pi(\hat f_0^{\x_2},\hat f_1^{\y_2})\right\}.$$
\end{itemize}
\end{lem}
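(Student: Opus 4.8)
The plan is to dispatch the two marginal identities and the absolute continuity in part i) via uniqueness of disintegrations and a Fubini argument, and to obtain part ii) from the classical ``a restriction of an optimal plan is optimal'' principle, argued by contradiction and gluing. For the marginals, since $\hat f_0 = \pi_x\sharp\hat\gamma$ and $\hat\gamma = \hat\mu^{\x_2}\otimes\zeta$ with $\zeta=\pi_{\x_2,\y_2}\sharp\hat\gamma$, I would test against a function $\psi=\psi(\x_1,\x_2)$, which does not depend on $\y_1$, to get
$$\int\psi\,d\hat f_0=\int\Big(\int\psi(\x_1,\x_2)\,d(\pi_{\x_1}\sharp\hat\mu^{\x_2})(\x_1)\Big)\,d\zeta(\x_2,\y_2).$$
Since $\pi_{\x_2}\sharp\hat f_0=\pi_{\x_2}\sharp\hat\gamma=\pi_{\x_2}\sharp\zeta$, this exhibits a disintegration of $\hat f_0$ over the same base measure used to define $\hat f_0^{\x_2}$, so uniqueness of disintegration forces $\hat f_0^{\x_2}=\pi_{\x_1}\sharp\hat\mu^{\x_2}$ for $\zeta$-a.e. $(\x_2,\y_2)$. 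The identity $\hat f_1^{\y_2}=\pi_{\y_1}\sharp\hat\mu^{\x_2}$ follows identically with $\psi=\psi(\y_1,\y_2)$, using that $\zeta$ is concentrated on the graph of the measurable bijection $\x_2\mapsto\y_2=\x_2+\z_2$ to identify the two index variables.

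For the absolute continuity I would note that $\hat f_0=\pi_x\sharp(\gamma\res\Gamma_i)\le\pi_x\sharp\gamma=f_0$, hence $\hat f_0$ has a density $g$ on $\R^d=\R^n_{\x_1}\times\R^{d-n}_{\x_2}$. By Fubini the base marginal $\pi_{\x_2}\sharp\hat f_0$ has density $\x_2\mapsto\int g(\x_1,\x_2)\,d\x_1$, and the disintegration fibers are $\hat f_0^{\x_2}=g(\cdot,\x_2)\,\mathcal{L}^n/\int g(\cdot,\x_2)\,d\x_1$ for a.e. $\x_2$, which is absolutely continuous with respect to $\mathcal{L}^n$; this is just the standard fact that an absolutely continuous measure disintegrates into absolutely continuous fibers.

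The core is part ii), which I would prove by contradiction. Suppose the set $A$ of those $\x_2$ for which $\hat\mu^{\x_2}$ fails to minimize $(P^{\x_2})$ has positive $\zeta$-measure; writing $A=\bigcup_k A_k$ where on $A_k$ the optimal value of $(P^{\x_2})$ is below $\int\hat c\,d\hat\mu^{\x_2}$ by at least $1/k$, some $A_k$ has positive measure. On $A_k$ I would select a competitor $\tilde\mu^{\x_2}\in\Pi(\hat f_0^{\x_2},\hat f_1^{\y_2})$ with strictly smaller $\hat c$-cost, set $\tilde\mu^{\x_2}=\hat\mu^{\x_2}$ elsewhere, and glue the fibers against $\zeta$ into a new plan $\tilde\gamma$ that modifies only $\gamma\res\Gamma_i$. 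Because each $\tilde\mu^{\x_2}$ shares the marginals $\hat f_0^{\x_2},\hat f_1^{\y_2}$ of $\hat\mu^{\x_2}$ and the $(\x_2,\y_2)$-coordinates are untouched, $\tilde\gamma$ still lies in $\Pi(f_0,f_1)$; and since any finite-$\hat c$-cost competitor is automatically supported on $\y-\x\in C_i\subset C$, the constant $|\z_2|^2$ per unit mass is unchanged, so the total $c$-cost of $\tilde\gamma$ drops strictly, contradicting optimality of $\gamma$ for $(MK)$.

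The step I expect to be the main obstacle is the measurable construction of the family $\x_2\mapsto\tilde\mu^{\x_2}$ used in this gluing, since without measurability $\int_{A_k}\tilde\mu^{\x_2}\,d\zeta$ is not even a well-defined measure. I would handle it by observing that $\x_2\mapsto(\hat f_0^{\x_2},\hat f_1^{\y_2})$ is measurable, that $\x_2\mapsto\Pi(\hat f_0^{\x_2},\hat f_1^{\y_2})$ is a measurable, weakly-$*$ compact-valued multifunction (tightness coming from the fixed marginals), and that $\pi\mapsto\int\hat c\,d\pi$ is lower semicontinuous; a Kuratowski--Ryll-Nardzewski / von Neumann--Aumann measurable selection theorem then yields both measurability of the value map $\x_2\mapsto\min(P^{\x_2})$ (needed to make the sets $A_k$ measurable) and a measurable selection of strictly better competitors on $A_k$. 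The remaining verifications — admissibility of $\tilde\gamma$ and the strict cost decrease — are the routine computation sketched above.
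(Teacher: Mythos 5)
Your proposal is correct and follows essentially the same route as the paper: part i) by exhibiting $(\pi_{\x_1}\sharp\hat\mu^{\x_2})\otimes(\pi_{\x_2}\sharp\zeta)$ as a disintegration of $\hat f_0$ over the same base measure and invoking uniqueness, and part ii) by contradiction, replacing $\hat\mu^{\x_2}$ by a strictly better competitor on the non-negligible bad set and gluing against $\zeta$ to contradict optimality (the paper contradicts optimality of $\hat\gamma=\gamma\res\Gamma_i$ between its own marginals rather than re-inserting into the full plan, but this is the same argument). Your explicit treatment of the measurable selection of the competitors and of the Fubini argument for absolute continuity of the fibers supplies details the paper leaves implicit.
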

\noindent Then using the theorem \ref{THM} in dimension $n\le d-1$, we get that, for $\zeta$- almost every $(\x_2,\x_2+\z_2)$ the measure $\hat\mu^{\x_2}$ is concentrated on a graph. Then, recalling that $\zeta$ is concentrated on $\{(\x_2,\x_2+\z_2)\}$, we get the desired result.\\

\begin{proof} [Proof of the lemma]
i) Let us show $\hat f_0^{\x_2}= \pi_{\x_1}\sharp \hat \mu^{\x_2}$. On the one hand, we have:
\begin{eqnarray*}
\hat f_0&=& \pi_{\x_1,\x_2}\sharp \hat \gamma= \pi_{\x_1,\x_2}\sharp (\hat \mu^{\x_2}(\x_1,\y_1)\otimes \zeta(\x_2,\y_2))\\
&=&(\pi_{\x_1}\sharp \hat \mu^{\x_2})\otimes (\pi_{\x_2}\sharp\zeta(\x_2,\y_2)).
\end{eqnarray*}
On the other hand: $$\pi_{\x_2}\sharp\zeta(\x_2,\y_2)=\pi_{\x_2}\sharp\pi_{\x_2,\y_2}\sharp\hat \gamma(\x,\y)=\pi_{\x_2}\sharp \hat f_0.$$
Then, by uniqueness of the disintegration $\hat f_0^{\x_2} $ of $\hat f_0$  outside a $\pi_{\x_2}\sharp f_0$ negligible set, we get the result.\\
In the same way, we can show $\hat f_1^{\y_2}= \pi_{\y_1}\sharp \hat \mu^{\x_2}$. Moreover as $f_0$ is absolutely continuous with respect to Lebesgue, also is $\hat f_0^{\x_2}$ (with respect to the $n-$dimensional Lebesgue measure, instead of the $d-$dimensional one).

\noindent ii) We proceed by contradiction. Assume there exists $E\subset \{(\x_2,\x_2+\z_2)\}$ not negligible such that, for all  $(\x_2,\x_2+\z_2)\in E$,  the measure $\hat\mu^{\x_2}$ is not optimal for 
$(P^{\x_2})$. For any  $(\x_2,\x_2+\z_2)\in E$, let us pick another measure $\bar \mu^{\x_2}$ chosen so as to be optimal for  $(P^{\x_2})$. We set:
$$\bar{\gamma}=\bar\mu^{\x_2}\otimes \zeta\res E+  \hat\mu^{\x_2}\otimes \zeta\res {E^c}$$
where $E^c$ is the complementary of  $E$ in $\tilde C_i$. \\
Then $\bar{\gamma}\in \Pi(\hat f_0,\hat f_1)$,  indeed:
\begin{eqnarray*}
\pi_x\sharp \bar \gamma&=&  \pi_x\sharp (\bar\mu^{\x_2}\otimes \zeta\res E)+   \pi_x\sharp (\hat\mu^{\x_2}\otimes \zeta\res {E^c})\\
&=&  \pi_{\x_1}\sharp \bar\mu^{\x_2}\otimes \pi_{\x_2}\sharp\zeta\res E+   \pi_{\x_1}\sharp \hat\mu^{\x_2}\otimes  \pi_{\x_2}\sharp\zeta\res {E^c}\\
&=& \hat f_0^{\x_2}\otimes ( \pi_{\x_2}\zeta\res E+ \pi_{\x_2}\sharp\zeta\res {E^c})\\
&=&  \hat f_0^{\x_2}\otimes \pi_{\x_2}\zeta=\hat f_0,
\end{eqnarray*}
the same holds for the $y$ marginal.
Moreover:
\begin{eqnarray*}
\int c(x,y)\ d\bar\gamma(x,y)
&=&\int_{E}\int \vert \x_1-\y_1\vert^2\ d\bar\mu^{\x_2}(\x_1)d\zeta(\x_2,\x_2+\z_2)\\& &+ \int_{E^c}\int \vert \x_1-\y_1\vert^2\ d\hat\mu^{\x_2}(\x_1)d\zeta(\x_2,\x_2+\z_2)+\vert \z_2\vert^2\gamma({\hat C}_i)\\
&<& \int_{E}\int \vert \x_1-\y_1\vert^2\ d\hat\mu^{\x_2}(\x_1)d\zeta(\x_2,\x_2+\z_2)\\
&& + \int_{E^c}\int \vert \x_1-\y_1\vert^2\ d\hat \mu^{\x_2}(\x_1)d\zeta(\x_2,\x_2+\z_2)+\vert \z_2\vert^2\gamma({\hat C}_i)\\
&=&\int c(x,y)\ d\hat\gamma(x,y).
\end{eqnarray*}
This is impossible because $\hat\gamma$ is optimal for the transport problem with cost $c$ between its marginals $\hat f_0$ and $\hat f_1$.
\end{proof}

\section{An application to crystalline norms and $L^\infty$ problems}

We present in this section how the results of this paper may be applied, following the strategy presented in \cite{CarDePSan}, to the problem of the existence of optimal transport maps when the cost is a crystalline norm $||x-y||$. We recall that a norm is called crystalline if its unit ball is a convex polytope containing $0$ in its interior with a finite number of faces; In particular, this means that it has the form $||z||=\max_i z\cdot v_i$ for a certain finite set of vectors $(v_i)_{i=1,\dots,k}$. This case has been first studied in \cite{AmbKirPra} and was one of the first steps towards the extension of the Monge problem to general norms. The existence of an optimal map for norm costs has finally been established in full generality in \cite{CD1}.

The strategy proposed in \cite{CarDePSan} concerns the minimization of a transport cost $\int c(x-y)d\pi$ for a convex, but not necessarily strictly convex, function $c$. In the same spirit of the proof of Theorem \ref{THM}, a decomposition may be performed on an optimal transport plan $\gamma$, according to the ``faces'' of the cost. In this case, since the crystalline norm has a finite number of faces $F_i$ (which are the cones on the maximal flat parts of its unit ball), one can write $\gamma=\sum_i \gamma_i$ where the measures $\gamma^i$ are obtained in the following way. If $f_0<<\lcal^d$, duality implies (see  \cite{CarDePSan}) that for a.e. $x$ there is an index $i=i(x)$ such that $(x,y)\in\spt(\gamma)$ implies $x-y\in F_i$ (in case of non-uniqueness of this index just pick one at random, for instance by fixing an order relation on the faces). Then define $\gamma_i=\gamma\res{\{(x,y)\,:\,i(x)=i\}}$.

This means that it is enough to prove that every plan $\gamma^i$ may be actually induced by a transport map (more precisely, that there exists a new plan, with the same marginals and an improved transport cost, which is induced by a map). This is quite easy since the cost, when restricted to a face, is actually affine, and hence every transport plan with the same marginals and supported on $\{(x,y)\,:\,x-y\in F_i\}$ gives the same cost. The issue is thus reduced to finding an arbitrary admissible transport map $T$, with prescribed marginals $f_0^i$ and $f_1^i$ (defined as the marginals of the original $\gamma^i)$, satisfying $x-T(x)\in F_i$. 

Thanks to the results of the present paper this is possible by considering, for instance, the minimization problem (MK) with convex set $C=F_i$. This selects a particular transport map (the one minimizing the quadratic cost among admissible ones), but gives in particular the existence of one such transport map. The only point to be noticed is the fact that a transport plan with finite cost is actually known to exist, and it is $\gamma^i$.

As a consequence, we have a new proof of 
\begin{thm} If $f_0<<dx$ and $||\cdot||$ is a crystalline norm, the transport problem
$$\min\left\{\int||x-y||d\pi\,:\,\pi\in\Pi(f_0,f_1)\right\}$$
admits at least a solution induced by a transport map $T$.
\end{thm}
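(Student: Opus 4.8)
The plan is to follow the decomposition-and-gluing strategy outlined just above, using Theorem \ref{THM} as the engine that converts each piece of an optimal plan into a map. First I would fix an optimal plan $\gamma\in\Pi(f_0,f_1)$ for the norm cost; this exists by the standard Kantorovich existence result, since $||\cdot||$ is continuous, nonnegative and finite, so the infimum is finite and attained. Writing $||z||=\max_i z\cdot v_i$ with faces $F_i=\{z:\ ||z||=z\cdot v_i\}$ (the cones over the maximal flat parts of the unit ball, finitely many), I would invoke the duality result of \cite{CarDePSan}: since $f_0\ll\lcal^d$, for $f_0$-a.e.\ $x$ there is an index $i(x)$ with $x-y\in F_{i(x)}$ whenever $(x,y)\in\spt\gamma$. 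Setting $\gamma_i:=\gamma\res\{(x,y):\ i(x)=i\}$ gives a finite decomposition $\gamma=\sum_i\gamma_i$, with marginals $f_0^i:=\pi_x\sharp\gamma_i\ll\lcal^d$ and $f_1^i:=\pi_y\sharp\gamma_i$.

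The core observation is that the norm is affine on each face: on the support of any plan with marginals $f_0^i,f_1^i$ concentrated on $\{x-y\in F_i\}$ one has $||x-y||=(x-y)\cdot v_i$, so that
$$\int ||x-y||\,d\pi=\int x\cdot v_i\,df_0^i(x)-\int y\cdot v_i\,df_1^i(y),$$
a quantity depending only on the marginals. Hence, to produce a map without changing the total cost, it suffices to replace each $\gamma_i$ by any plan with the same marginals, concentrated on $\{x-y\in F_i\}$, that happens to be induced by a map. To obtain such a map I would apply Theorem \ref{THM} to the quadratic cost with convex constraint $C=-F_i$, so that admissibility $y-x\in C$ is exactly $x-y\in F_i$. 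The cone $-F_i$ is a convex polyhedron, hence has only finitely many flat parts, so hypothesis ii) holds; hypothesis i) is witnessed by $\gamma_i$ itself, which is concentrated on $\{x-y\in F_i\}$ and has finite quadratic cost. Theorem \ref{THM} then yields a map $T_i$, optimal for this constrained quadratic problem between $f_0^i$ and $f_1^i$, and in particular admissible, i.e.\ $x-T_i(x)\in F_i$.

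It remains to glue. The plan $\gamma_i':=({\rm Id}\times T_i)\sharp f_0^i$ has the same marginals as $\gamma_i$ and is concentrated on $\{x-y\in F_i\}$, so by the affine identity above it carries the same norm-cost as $\gamma_i$; consequently $\gamma':=\sum_i\gamma_i'$ lies in $\Pi(f_0,f_1)$ and has the same total cost as $\gamma$, hence is optimal. Finally, since $f_0\ll\lcal^d$ the sets $\{i(x)=i\}$ partition $\spt f_0$ up to a null set, so setting $T(x):=T_i(x)$ on the $i$-th piece gives an $f_0$-a.e.\ well-defined map with $\gamma'=({\rm Id}\times T)\sharp f_0$, which is the desired optimal transport map.

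I expect the main obstacle to be the verification of hypothesis i) of Theorem \ref{THM} for the pieces: finiteness of $\int|x-y|^2\,d\gamma_i$ does not in general follow from finiteness of the norm-cost $\int||x-y||\,d\gamma$, since the quadratic cost grows faster than the linear one. This is automatic when $f_0,f_1$ have bounded support (forcing bounded displacements), and more generally one must check that the restriction of $\gamma$ to each face still admits a finite-quadratic-cost competitor between its own marginals. Apart from this integrability point, the argument is a clean assembly of the cited duality decomposition, the affine behaviour of the norm on its faces, and Theorem \ref{THM}.
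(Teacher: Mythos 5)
Your proof follows exactly the paper's strategy: the same face decomposition via the duality result of \cite{CarDePSan}, the same observation that the norm is affine on each face so that the cost of any admissible plan depends only on the marginals, and the same invocation of Theorem \ref{THM} with the cone over the face as the constraint set to produce the maps $T_i$ that are then glued. The integrability caveat you raise at the end is a genuine subtlety, but it is equally present in the paper, which simply asserts that $\gamma^i$ witnesses hypothesis i) without discussing the finiteness of its quadratic cost (automatic for compactly supported data, as you note).
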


Analogously, the arguments used in this paper may also be applied to $L^\infty$ problems like those studied in \cite{Winfty}. We already pointed out that the strategy of the quadratic perturbation is useful when one only wants to prove the existence of at least a transport map $T$ sending $f_0$ onto $f_1$ and satisfying $T(x)-x\in C$. Suppose now that we want to solve
$$(M_\infty)\quad\min\left\{\pi-\ess ||y-x||\quad:\quad\pi\in\Pi(f_0,f_1)\right\},$$
where $||\cdot||$ is a (possibly asymmetric) norm whose unit ball is $C$. Notice that we have the equality $\pi-\ess ||y-x||=\max\{||y-x||\,:\,(x,y)\in\spt(\pi)\}$ and that this quantity measures the maximal displacement in terms of the norm $||\cdot||$.
Then it is sufficient to denote by $L$ the minimimal value of such an $\ess$ and to notice that $\pi$ is optimal if and only if it is concentrated on $\{(x,y)\,:\,y-x\in LC\}$. In order to find a solution induced by a transport map $T$ it is enough, for instance, to take a map $T$ solving $(M)$ for the cost $c$ defined with the dilated body $LC$ instead of $C$. Hence we have the following theorem, generalizing the results in \cite{Winfty} (and using, by the way, a different selection principle, since the plan which was selected in \cite{Winfty} was the  infinitely cyclically monotone one, and not that minimizing the quadratic cost).
\begin{thm} Suppose that $C$ satisfies the assumptions of Theorem \ref{THM}, that $0\in {\rm int}(C)$ and that $f_0<<dx$. Let $||\cdot||$ be defined through the formula $||z||:=\inf\{\lambda>0\,:\,z/\lambda\in C\}$. Then, the $L^\infty$ transport problem $(M_\infty)$
admits at least a solution induced by a transport map $T$.
\end{thm}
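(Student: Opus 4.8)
The plan is to reduce the $L^\infty$ problem $(M_\infty)$ to a quadratic problem with a convex constraint and then invoke Theorem~\ref{THM}. First I would record the elementary geometric facts underlying the reduction. Since $C$ is closed, bounded (because $\|\cdot\|$ is a genuine norm) and $0\in\mathrm{int}(C)$, the gauge $\|\cdot\|$ satisfies $\|z\|\le L\iff z\in LC$ for every $L>0$, and each dilate $LC$ is again a closed bounded convex body with $0$ in its interior. Consequently, for any $\pi\in\Pi(f_0,f_1)$ one has $\pi-\ess\|y-x\|\le L$ if and only if $\pi$ is concentrated on the closed set $K_L:=\{(x,y)\,:\,y-x\in LC\}$.

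Next I would set $L:=\inf(M_\infty)$. The case $L=0$ is trivial ($f_0=f_1$ and $T=\mathrm{Id}$ works), and if $L=+\infty$ there is nothing to prove, so I assume $0<L<+\infty$; the finiteness means some admissible plan has finite essential sup. Attainment of the infimum then follows from the weak-$*$ (narrow) compactness of $\Pi(f_0,f_1)$, guaranteed by tightness of the fixed marginals, together with the lower semicontinuity of the functional $\pi\mapsto\pi-\ess\|y-x\|$ along weak-$*$ converging sequences: if $\pi_n\weakstarto\pi$ with $\pi_n$ concentrated on $K_{M_n}$ and $M_n\to M$, then the portmanteau inequality applied to the closed sets $K_{M+\eps}$ forces $\pi$ to be concentrated on $K_M$. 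Fixing an optimal plan $\gamma$, the equivalence of the first paragraph shows that $\gamma$ is concentrated on $K_L$.

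Then I would apply Theorem~\ref{THM} to the quadratic cost given by (\ref{cost}) with $LC$ in place of $C$; call it $c_L$. Both hypotheses must be checked. Hypothesis (ii) holds because the flat parts of $LC$ are exactly the images under $z\mapsto Lz$ of the flat parts of $C$, hence at most countable. Hypothesis (i) holds because $\gamma$ is concentrated on $K_L$ and $LC$ is bounded, so $|y-x|\le L\,\diam(C)$ on $\spt(\gamma)$ and therefore $\int c_L(x,y)\,d\gamma=\int|y-x|^2\,d\gamma<+\infty$; moreover $f_0\ll dx$ is assumed and the marginals of the $c_L$-problem are again $f_0$ and $f_1$. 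Theorem~\ref{THM} then yields an optimal plan for the Kantorovich problem with cost $c_L$ that is induced by a map $T$ with $T_\sharp f_0=f_1$, and finiteness of the optimal $c_L$-cost forces $T(x)-x\in LC$, i.e.\ $\|T(x)-x\|\le L$, for $f_0$-a.e.\ $x$. Hence $(\mathrm{Id}\times T)_\sharp f_0$ is concentrated on $K_L$, so $\pi-\ess\|y-x\|\le L$ for this plan; by minimality of $L$ equality holds, and $T$ solves $(M_\infty)$.

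The main obstacle is not in the final reduction, which is a direct transcription into the hypotheses of Theorem~\ref{THM}, but in the two facts grouped in the first two paragraphs: the lower semicontinuity and consequent attainment of the essential-sup functional, and the clean equivalence between optimality and concentration on $K_L$. These are standard in the $L^\infty$-transport literature (cf.\ \cite{Winfty}), but they carry the analytic content of the statement; once they are secured, the application of Theorem~\ref{THM} to the dilated body $LC$ is routine.
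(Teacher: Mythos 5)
Your proposal is correct and follows essentially the same route as the paper: identify the optimal value $L$, observe that optimality of a plan is equivalent to concentration on $\{(x,y):y-x\in LC\}$, and then apply Theorem \ref{THM} with the dilated body $LC$ in place of $C$. The only difference is that you spell out the attainment of $L$ and the verification of hypotheses (i)--(ii) for $LC$, which the paper leaves implicit (citing \cite{Winfty} for the $L^\infty$ background); these details are filled in correctly.
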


\end{document}